\title[B\'{e}zout's Bound]{Extrema  of spectral band functions of  two dimensional discrete periodic Schr\"odinger operators}
\author[M. Faust]{Matthew Faust}
\address[M. Faust]{Department of Mathematics, Texas A\&M University, College Station, TX 77843-3368, USA} 
\address{Current address: Department of Mathematics, Michigan State University, East Lansing, MI 48824, USA} \email{mfaust@msu.edu}
\author[W. Liu]{Wencai Liu}
\address[W. Liu]{Department of Mathematics, Texas A\&M University, College Station, TX 77843-3368, USA} \email{liuwencai1226@gmail.com; wencail@tamu.edu}
\author[E. Luo]{Ethan Luo}
\address[E. Luo]{Department of Mathematics, Texas A\&M University, College Station, TX 77843-3368, USA} \email{el0337@tamu.edu}
\keywords{ Bernstein-Khovanskii-Kushnirenko   bound, B\'{e}zout Bound, discrete periodic Schr\"odinger operators, extrema, Fermi varieties.}
\thanks{{\em 2020 Mathematics Subject Classification.} 
%Primary:  14M25.  Secondary: 81Q10; 47B36.}
Primary:  81Q10.  Secondary: 14M25; 47B36.}
\theoremstyle{plain}
\newtheorem{theorem}{Theorem}[section]
\newcommand{\R}{\mathbb{R}}
\newtheorem{corollary}[theorem]{Corollary}
\newtheorem{lemma}[theorem]{Lemma}
\newtheorem{proposition}[theorem]{Proposition}
\newtheorem{remark}{Remark}
\newcommand{\C}{\mathbb{C}}
\newcommand{\T}{\mathbb{T}}
\newcommand{\Z}{\mathbb{Z}}
\theoremstyle{plain}
\newtheorem{definition}{Definition}
\newtheorem{conjecture}{Conjecture}
\begin{document}	
% \newcommand{\N}{\mathbb{N}}
%-------------------------------------------
\begin{abstract}
We use B\'{e}zout's theorem and Bernstein-Khovanskii-Kushnirenko theorem to analyze the level sets of the extrema of the spectral band functions of discrete periodic Schr\"odinger operators on $\mathbb{Z}^2$.  These approaches improve upon previous results of Liu and Filonov-Kachkovskiy.
\end{abstract}
\maketitle 

%-------------------------------------------
\section{Introduction and main results}
The discrete periodic Schr\"odinger operator is a Laplacian $\Delta$ together with a periodic potential $V : \Z^2 \to \C$, often denoted as $\Delta + V$.  
We say that $V : \Z^2 \rightarrow \C$ is $(q_1,q_2)$-periodic if, for  all $n = (n_1, n_2) \in \Z^2$, 
\begin{equation}
    V(n) = V(n_1 + q_1, n_2) = V(n_1, n_2 + q_2).
\end{equation}
In this paper,  we always fix the period $q = (q_1, q_2) \in \Z^2$.

Given a function $u $ on $\Z^2$, $\Delta + V$ acts on $u$ as follows: for each $n = (n_1,n_2) \in \Z^2$,
\begin{equation}
    ((\Delta + V) u)(n)= u(n_1 + 1, n_2) + u(n_1 - 1, n_2) + u(n_1, n_2 + 1) + u(n_1, n_2 - 1) + V(n)u(n).
\end{equation}

A function $u : \Z^2 \to \C$ satisfies Floquet-Bloch boundary conditions for some $k = (k_1, k_2) \in \R^2$ if both of the following equations are satisfied for all $n = (n_1, n_2) \in \Z^2$:
\begin{equation}\label{g1}
    u(n_1 + q_1, n_2) = e^{2\pi i k_1}u(n),
\end{equation}
and
\begin{equation}\label{g2}
    u(n_1, n_2 + q_2) = e^{2\pi i k_2}u(n).
\end{equation}

 Let $Q = q_1q_2$. If $u$ satisfies \eqref{g1} and \eqref{g2}, then we can represent $u$ as the vector of $Q$ values $\{u(n_1,n_2) : n_1 \in [q_1], n_2 \in [q_2]\}$, where $[q_j]=\{1,2,\cdots,q_j\}$, $j=1,2$.

The periodic operator $\Delta+V$ with boundary conditions \eqref{g1} and \eqref{g2} can be realized as a linear operator (matrix) on the vector space $\{u(n_1,n_2): n_1\in[q_1], n_2\in[q_2]\}$. Denote the linear operator by $D_V(k)$, which depends on $V$ and $k=(k_1,k_2)$.
 More precisely,  $D_V(k)$ has the following expression for $q_1\geq 3$ and $ q_2\geq 3$ (see \cite{GKTBook,liu1,kuc2016}):
 \[ (D_V(k_1,k_2))_{(m,n),(m',n')} = \begin{cases} 
          V(m,n) & m=m', n = n' \\
          1 & i = (m-m')^2 + (n-n')^2 = 1 \\
          e^{2 \pi i k_1} & m'=1, m=q_1, n=n' \\ 
          e^{-2 \pi i k_1} & m'=q_1, m=1, n=n' \\ 
          e^{2 \pi i k_2} & m=m', n'=1, n=q_2 \\ 
          e^{-2 \pi i k_2} & m=m', n'=q_2, n=1 \\ 
          0 & \text{otherwise.}
       \end{cases}\]

For example, when $q_1 = 3$ and $q_2 = 3$ we get the matrix,

\[
\begin{blockarray}{cccccccccc}
(1,1) & (1,2) & (1,3) & (2,1) & (2,2) & (2,3) & (3,1) & (3,2) & (3,3) \\
    \begin{block}{(ccccccccc)c}
        V_{1,1} & 1 & e^{-2 \pi i k_2} & 1 & 0 & 0 & e^{-2 \pi i k_1} & 0 & 0 & (1,1)\\
        1 & V_{1,2} & 1 & 0 & 1 & 0 & 0 & e^{-2 \pi i k_1} & 0 & (1,2)\\
        e^{2 \pi i k_2} & 1 & V_{1,3} & 0 & 0 & 1 & 0 & 0 & e^{-2 \pi i k_1} & (1,3)\\
        1 & 0 & 0 & V_{2,1} & 1 & e^{-2 \pi i k_2} & 1 & 0 & 0 & (2,1)\\
        0 & 1 & 0 & 1 & V_{2,2} & 1 & 0 & 1 & 0 & (2,2)\\
        0 & 0 & 1 & e^{2 \pi i k_2} & 1 & V_{2,3} & 0 & 0 & 1 & (2,3)\\
        e^{2 \pi i k_1} & 0 & 0 & 1 & 0 & 0 & V_{3,1} & 1 & e^{-2 \pi i k_2} & (3,1)\\
        0 & e^{2 \pi i k_1} & 0 & 0 & 1 & 0 & 1 & V_{3,2} & 1 & (3,2)\\
        0 & 0 & e^{2 \pi i k_1} & 0 & 0 & 1 & e^{2 \pi i k_2} & 1 & V_{3,3} & (3,3)\\
    \end{block}
\end{blockarray}.
\]

 Floquet theory  says that the discrete periodic Schr\"odinger operator 
 $\Delta + V$  is unitarily equivalent  to the direct integral of $D_V(k)$ over $k\in (\R/\Z)^2$ (e.g., \cite{liu1,kuc2016,GKTBook}). Consequently, analyzing $D_V(k) $ is fundamental to understanding the spectral theory of $\Delta + V$. Therefore, the primary objective of this paper is to  study $D_V(k)$.

%By collecting the eigenvalues of $D_V(k)$ as $k$ varies over $\R^2$ we recover the spectrum of $\Delta + V$. Notice that as $e^{\pm 2 \pi i k_j}$ is periodic, we can reduce the domain of $k$ from $\R^2$ to $[0,1)^2$.
%It is easy to see that for a fixed $q \in \Z^2$, $D_V(k)$ has $Q$ eigenvalues
For each $k \in [0,1)^2$, define $\lambda_j(k)$ to be the function that returns the $j$th smallest eigenvalue of $D_V(k)$. Notice that, by definition, for each $k$ we have $\lambda_1(k) \leq \lambda_2(k) \leq \cdots \leq \lambda_{Q}(k)$. We call $\lambda_j(k)$ the $j$th spectral band function. 

%\begin{definition}
%{\color{blue} We can use $\lambda^V_j$ to denote the spectral band function of $D_V(k)$. For simplicity, use $\lambda_j$}
%    For any square $n \times n$ matrix $A(k)$, for all $1 \leq i \leq n$, the spectral band function $\lambda_i(k)$ represents the value of the $i$-th smallest eigenvalue of $A(k)$.
%\end{definition}

%A point of the band function is an extrema of $\lambda_i(k)$ if it is globally a minimum of maximum of $\lambda_i(k)$. %The structure of the extrema of the band functions play a consequential role in many problems such as Green's function asymptotics, Louisville type theorems, Anderson localization, and effective mass
 
About 30 to 40 years ago, significant progress was made in studying discrete periodic Schrödinger operators, particularly regarding the irreducibility of Bloch and Fermi varieties~\cite{GKTBook,ktcmh90,bktcm91,bat1,batcmh92,ls,battig1988toroidal}, as well as inverse problems~\cite{kapiii,Kapi,Kapii}.
Recently, there has been a significant resurgence of interest  in investigating the spectral theory of discrete periodic operators. These studies have addressed various topics, including the irreducibility of Bloch and Fermi varieties~\cite{fls,shjst20, flm22, liu1, flm23, fg}, flat bands~\cite{sy23}, extrema of the Bloch variety~\cite{FS, fk18, fk2,dksjmp20}, Fermi and Floquet isospectrality~\cite{liu2d, liujde, flmrp}, quantum ergodicity~\cite{ms22, liu2022bloch}, Borg’s theorem~\cite{liuborg}, the Bethe-Sommerfeld conjecture~\cite{ef, hj18, fh}, and embedded eigenvalues~\cite{liu1,shi1,kv06cmp,kvcpde20,lmt}. For a comprehensive overview of the importance and historical context of these studies, see~\cite{GKToverview, col91, kuc2016, kusu01, kuchment2023analytic, liujmp22}.

In this paper, we concern ourselves with the extrema of the spectral band functions $\lambda_j(k)$, $j=1,2,\cdots,Q$. 
%A well known and widely believed conjecture is the spectral edges conjecture~\cite[Conjecture 3]{liu1}. This conjecture states that, for a generic potential, the extrema of the band functions are (1) attained by a single band, (2) are isolated, and (3) are nondegenerate. Part (1) of this conjecture was proven in ~\cite{kr}. Part (2) of this conjecture was proven in ~\cite{liu1} when $q_1$ and $q_2$ are coprime, and in \cite{fk2} it was shown that there exist periods such that $(2)$ is false. Some progress on part (3) of this conjecture has been investigated~\cite{ks87, col91}.
%In~\cite{liu1}, as part of proving part (2) of the spectral edges conjecture, it is shown that when $q_1$ and $q_2$ are coprime, the collection of extrema for any given band is a finite set of at most $4(q_1+q_2)^2$ points. 
In this work, we build upon the general strategy outlined in~\cite{liu1}. A significant result from~\cite{liu1} demonstrates that the cardinality of each level set of extrema can be bounded by analyzing a system of two Laurent polynomials in two variables. Based on this fact, one of the authors in~\cite{liu1} employed B\'{e}zout's theorem to obtain a bound on the cardinality of each level set of extrema.

In this paper, we first refine this approach by applying an improved version of B\'{e}zout's theorem. Specifically, we introduce an algebraic change of variables, leading to a better bound on the cardinality of each level set of the extrema. This bound improves upon the results in~\cite{liu1}. Furthermore, we utilize the Bernstein-Khovanskii–Kushnirenko (BKK) theorem, which often provides sharper bounds than B\'{e}zout's theorem, to further constrain the number of the extrema in each level set.

As a result, we establish a tighter bound than that obtained from the (improved) application of B\'{e}zout's theorem. In particular, by applying the BKK theorem, we show that if \(\lambda_*\) corresponds to an extremum of \(\lambda_m(k)\), and \(q_1\) and \(q_2\) are coprime, then the set  
\[
\{k \in [0,1)^2 : \lambda_m(k) = \lambda_*\}
\]  
has cardinality at most \(4q_1q_2\).

%-------------------------------------------
\section{Useful properties}

Recall from the introduction, we defined $D_V(k)$ to be the $Q \times Q$ matrix representing the Floquet transform of the discrete periodic Schr\"odinger operator with Floquet-Bloch boundary conditions (3) and (4). Let $z_j=e^{2\pi k_j}$ for $j=1,2$, and let $\T$ be the complex unit circle.
Clearly, we have a bijection between $z \in \T^2$ and $k \in [0,1)^2$. Thus, we may express $D_V(k)$ as $\mathcal{D}_V(z)$, while just changing the domain. For example, when $q = (3,3)$ we obtain the matrix, 

\[
\mathcal{D}_V(z) =
\begin{blockarray}{cccccccccc}
(1,1) & (1,2) & (1,3) & (2,1) & (2,2) & (2,3) & (3,1) & (3,2) & (3,3) \\
    \begin{block}{(ccccccccc)c}
        V_{1,1} & 1 & z_2^{-1} & 1 & 0 & 0 & z_1^{-1} & 0 & 0 & (1,1)\\
        1 & V_{1,2} & 1 & 0 & 1 & 0 & 0 & z_1^{-1} & 0 & (1,2)\\
        z_2 & 1 & V_{1,3} & 0 & 0 & 1 & 0 & 0 & z_1^{-1} & (1,3)\\
        1 & 0 & 0 & V_{2,1} & 1 & z_2^{-1} & 1 & 0 & 0 & (2,1)\\
        0 & 1 & 0 & 1 & V_{2,2} & 1 & 0 & 1 & 0 & (2,2)\\
        0 & 0 & 1 & z_2 & 1 & V_{2,3} & 0 & 0 & 1 & (2,3)\\
        z_1 & 0 & 0 & 1 & 0 & 0 & V_{3,1} & 1 & z_2^{-1} & (3,1)\\
        0 & z_1 & 0 & 0 & 1 & 0 & 1 & V_{3,2} & 1 & (3,2)\\
        0 & 0 & z_1 & 0 & 0 & 1 & z_2 & 1 & V_{3,3} & (3,3)\\
    \end{block}
\end{blockarray}.
\]

Let $P_V(k,\lambda)$ be the characteristic polynomial of $D_V(k)$, namely $P_V(k, \lambda) = \det(D_V(k) - \lambda I)$. Similarly, let $\mathcal{P}_V(z,\lambda)$ be the characteristic polynomial of $\mathcal{D}_V(z)$ ( $\mathcal{P}_V(z, \lambda) = \det(\mathcal{D}_V(z) - \lambda I)$). For the remainder of the paper, we assume $V$ to be a real valued $q$-periodic potential, and we drop the dependence of $V$ in our notation; for example, $\mathcal{P}(z,\lambda) $. Notice that $\mathcal{P}(z,\lambda)$ is a Laurent polynomial in the $z$ variables.
\begin{proposition}\label{prop}
Here we describe some of the known properties of $\mathcal{P}(z,\lambda)$ from the literature~\cite{liu1}. Assume without loss of generality that $q_1 \geq q_2$, then the following hold:
\begin{enumerate}
    \item For any $c_1, c_2 \in \mathbb{Z}$, such that $\mathcal{P}(z,\lambda_*)$ contains a non-zero coefficient on the term $z_1^{c_1}z_2^{c_2}$, we have 
    \[
    |c_1q_1| + |c_2q_2| \leq q_1q_2.
    \]
%    Furthermore, equality holds only when $c_1 = 0$ or $c_2 = 0$.
   \item   For a fixed $\lambda_*$, $\mathcal{P}(z,\lambda_*)$ is a polynomial of $z_1$, $z_1^{-1}$, $z_2$ and  $z_2^{-1}$  
   with terms $z_1^{q_2}$, $(z_1^{-1})^{q_2}$, $z_2^{q_1}$, and $(z_2^{-1})^{q_1}$ (up to a change of sign).
\end{enumerate}
\end{proposition}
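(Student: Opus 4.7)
The plan is to expand $\mathcal{P}(z,\lambda)=\det(\mathcal{D}_V(z)-\lambda I)$ via the Leibniz formula and analyze each summand through the cycle structure of its permutation, interpreted geometrically on $\Z^2$.

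First I would observe that for a permutation $\sigma$ of $[q_1]\times[q_2]$ to produce a nonzero term, each pairing $(m,n)\mapsto\sigma(m,n)=(m',n')$ must match a nonzero entry of $\mathcal{D}_V(z)-\lambda I$: namely $(m',n')=(m,n)$ (a fixed point, contributing $V(m,n)-\lambda$), an interior nearest-neighbor (contributing $1$), or a ``wrap-around'' neighbor (contributing $z_1^{\pm 1}$ or $z_2^{\pm 1}$). I would then assign to each pairing a step vector in $\{0,\pm e_1,\pm e_2\}\subset\Z^2$, choosing signs so that a factor $z_j$ corresponds to a $+e_j$ step and a factor $z_j^{-1}$ to a $-e_j$ step.

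Next, I would decompose $\sigma$ into disjoint cycles and lift each nontrivial cycle to a closed path on $\Z^2/(q_1\Z\times q_2\Z)$. Each such cycle acquires a winding vector $(a_i,b_i)\in\Z^2$ and contributes the monomial $z_1^{a_i}z_2^{b_i}$ to the product. Since the cycle requires at least $|a_i|q_1$ steps in the $\pm e_1$ direction and at least $|b_i|q_2$ steps in the $\pm e_2$ direction, its length satisfies $\ell_i\geq|a_i|q_1+|b_i|q_2$. Writing $(c_1,c_2)=(\sum_i a_i,\sum_i b_i)$ and letting $f$ be the number of fixed points,
\[
|c_1|q_1+|c_2|q_2\;\leq\;\sum_i\bigl(|a_i|q_1+|b_i|q_2\bigr)\;\leq\;\sum_i\ell_i\;=\;q_1q_2-f\;\leq\;q_1q_2,
\]
which is part (1).

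For part (2) I would focus on $z_1^{q_2}$, the other three extremal monomials being symmetric. Equality in the chain above with $(c_1,c_2)=(q_2,0)$ forces $f=0$, $\ell_i=|a_i|q_1+|b_i|q_2$ for every cycle, and all $a_i$'s (respectively $b_i$'s) of a common sign; this pins down $b_i=0$ and $a_i\geq 1$ for every cycle. A cycle consisting solely of $+e_1$ steps necessarily closes after exactly $q_1$ steps, so $a_i=1$ and $\ell_i=q_1$, hence there are $q_2$ cycles, one per row, and the unique contributing permutation is $\sigma(m,n)=(m+1\bmod q_1,\,n)$. Its contribution to $\mathcal{P}(z,\lambda_*)$ is $\sgn(\sigma)\cdot z_1^{q_2}$, giving coefficient $\pm 1$; applying the same argument to each of the four compass directions produces $z_1^{-q_2}$, $z_2^{q_1}$, and $z_2^{-q_1}$.

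The main obstacle I anticipate is setting up the geometric lift cleanly: one has to verify that the wrap-around entries are consistently oriented so that cycle winding numbers genuinely recover the exponents of $z_1$ and $z_2$, and that the bound $\ell_i\geq|a_i|q_1+|b_i|q_2$ becomes tight only when the cycle retraces no step. Once this bookkeeping is in place, both conclusions reduce to elementary combinatorics.
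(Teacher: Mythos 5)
Your argument is correct, and it is worth noting that the paper itself does not prove this proposition at all --- it is imported verbatim from \cite{liu1} --- so what you have written is a genuinely self-contained substitute for that citation. The core of your proof, reading each Leibniz term as a union of closed walks on the discrete torus and identifying the exponent of $z_j$ contributed by a cycle with its winding number in the $j$-th direction, is sound: the step-count inequality $\ell_i\geq|a_i|q_1+|b_i|q_2$ holds because a cycle of length at least $2$ contains no diagonal entries, so every step is a genuine unit step, and the net displacement in each direction bounds the number of steps in that direction. Your equality analysis for part (2) is also right, and the decisive point --- that a cycle made only of $+e_1$ steps must be a full row cycle of length exactly $q_1$, so the permutation contributing $z_1^{q_2}$ is unique and no cancellation can occur --- is exactly the step that makes the coefficient $\pm1$ rather than merely the degree bound $|c_1|\leq q_2$. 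Two small points deserve explicit mention if you write this up. First, your bookkeeping assumes every nonzero off-diagonal entry of $\mathcal{D}_V(z)-\lambda I$ is a single monomial ($1$ or $z_j^{\pm1}$); this is true precisely in the regime $q_1\geq q_2\geq 3$ in which the paper displays the matrix, whereas for $q_2\in\{1,2\}$ the interior and wrap-around entries merge (e.g.\ into $1+z_2^{-1}$ or $z_2+z_2^{-1}$) and the sum over permutations must be replaced by a sum over permutations together with a choice of monomial in each selected entry --- the same winding argument then still applies to each such marked configuration. Second, you should state explicitly that the lift of a wrap-around entry $z_1$ (row index $m=q_1$, column index $m'=1$) is a $+e_1$ step, so that the net number of signed boundary crossings of the lifted walk equals both the winding number and the $z_1$-exponent; this is the one place where an orientation error would silently break the identification, and once it is checked the rest is, as you say, elementary combinatorics.
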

From the Laurent polynomial $\mathcal{P}(z,\lambda_*)$, we can define a polynomial $\mathcal{P}_1(z,\lambda) = z_1^{q_2}z_2^{q_1}\mathcal{P}(z,\lambda)$.
\begin{theorem}\label{thmextrem}~\cite[Theorem 2.5]{liu1}
    Let $\lambda_{*}$ be an extremum of a band function $\lambda_m(k)$, for some $m \in [Q]$.
    Then we have 
    \begin{equation*}\label{gextrem}
		\{k\in \R^2: \lambda_m(k)=\lambda_{*}\} \subseteq \{k\in\R^2: {P}(k,\lambda_{*})=0, |\nabla_k {P}(k,\lambda_{*})|=0\},
    \end{equation*}
    where $\nabla$ is the gradient.
\end{theorem}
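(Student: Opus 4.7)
The plan is to split the argument by the multiplicity of $\lambda_*$ as an eigenvalue of $D_V(k_0)$, where $k_0$ is any point with $\lambda_m(k_0)=\lambda_*$. The vanishing $P(k_0,\lambda_*)=0$ is immediate, since $\lambda_*$ is by hypothesis an eigenvalue of $D_V(k_0)$, making the characteristic polynomial vanish. The real work is showing $\nabla_k P(k_0,\lambda_*)=0$. For this I would compute directional derivatives via Jacobi's formula applied to $M(k):=D_V(k)-\lambda_* I$, namely
\begin{equation*}
\partial_{k_i} P(k,\lambda_*)=\operatorname{tr}\!\bigl(\operatorname{adj}(M(k))\,\partial_{k_i}M(k)\bigr).
\end{equation*}
This expression makes both cases below transparent.

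\emph{Case 1: $\lambda_*$ has multiplicity $r\ge 2$ as an eigenvalue of $D_V(k_0)$.} Then $\dim\ker M(k_0)\ge 2$, so $\operatorname{rank} M(k_0)\le Q-2$, every $(Q-1)\times(Q-1)$ minor vanishes, and $\operatorname{adj}(M(k_0))=0$. By Jacobi's formula $\partial_{k_i}P(k_0,\lambda_*)=0$ for each $i$, and we are done without using the extremum hypothesis.

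\emph{Case 2: $\lambda_*$ is a simple eigenvalue.} By analytic perturbation theory for the self-adjoint family $k\mapsto D_V(k)$, there is a neighborhood of $k_0$ on which $\lambda_m(k)$ is real-analytic and remains separated from the other eigenvalues. Since $\lambda_*$ is an extremal value of $\lambda_m$ and $\lambda_m(k_0)=\lambda_*$, the point $k_0$ is a local extremum of this smooth function, so $\nabla_k\lambda_m(k_0)=0$. Differentiating the local identity $P(k,\lambda_m(k))\equiv 0$ gives
\begin{equation*}
\partial_{k_i}P(k_0,\lambda_*)+\partial_\lambda P(k_0,\lambda_*)\,\partial_{k_i}\lambda_m(k_0)=0,
\end{equation*}
and the second term vanishes, so $\partial_{k_i}P(k_0,\lambda_*)=0$.

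The only real subtlety to watch is the interpretation of ``extremum'' in Case~2: one needs that every $k_0$ in the level set $\{\lambda_m=\lambda_*\}$ is itself a local extremum of $\lambda_m$, not merely that some point in the level set is. This is automatic if $\lambda_*$ is taken to be a global maximum or minimum of $\lambda_m$ on the torus (which is the setting of interest, since the minimum and maximum of each band are what determine the spectral band), and it is the standard convention in the cited reference. Apart from this bookkeeping, the argument is entirely linear-algebraic: Case~1 is a rank computation via the adjugate, and Case~2 reduces to the one-variable implicit function theorem together with the elementary fact that a smooth function has vanishing gradient at a local extremum.
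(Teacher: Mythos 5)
This theorem is imported into the paper by citation (\cite[Theorem 2.5]{liu1}); the paper itself offers no proof, so your argument can only be compared against the standard one in the cited source. Your proof is correct, and it reaches the conclusion by a somewhat different route. The argument in \cite{liu1} works with the factorization $P(k,\lambda_*)=\prod_{j=1}^{Q}(\lambda_j(k)-\lambda_*)$: when $\lambda_*$ has multiplicity at least two at $k_0$, the Lipschitz continuity of the eigenvalues forces $P(k,\lambda_*)=O(|k-k_0|^2)$, and when it is simple, the remaining factors have constant sign near $k_0$ while $\lambda_m(k)-\lambda_*$ does not change sign (because $\lambda_*$ is the max or min of the band), so $k_0$ is a local extremum of the smooth function $P(\cdot,\lambda_*)$ and its gradient vanishes. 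Your version replaces the first step by the adjugate computation $\partial_{k_i}P=\operatorname{tr}\bigl(\operatorname{adj}(M)\,\partial_{k_i}M\bigr)$ with $\operatorname{adj}(M(k_0))=0$ when $\operatorname{rank}M(k_0)\le Q-2$, and the second step by analytic perturbation of a simple eigenvalue plus the chain rule on $P(k,\lambda_m(k))\equiv 0$; both substitutions are valid, and the adjugate argument is arguably cleaner since it avoids any regularity discussion of the individual $\lambda_j$. Your closing caveat is the one genuinely load-bearing point: the statement is only true if every point of the level set is a local extremum of $\lambda_m$, which holds precisely because ``extremum'' here means the global maximum or minimum value of the band, as in \cite{liu1}; with that reading your proof is complete.
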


Letting $\C^{\star}=\C\backslash\{0\}$, we have the following lemma.
\begin{lemma}
    Let \begin{equation}
        S_1=\{z\in\C^2: \mathcal{P}_1(z,\lambda_*)=\nabla_z\mathcal{P}_1(z,\lambda_*)=0\},
    \end{equation} \begin{equation}
        S_2=\{z\in(\C^{\star})^2: \mathcal{P}(z,\lambda_*)=\nabla_z \mathcal{P}(z,\lambda_*)=0\},  
    \end{equation}
    and
    \begin{equation}
        S_3=\{z=(z_1,z_2): |z_1|=|z_2|=1,\mathcal{P}(z,\lambda_*)=\nabla_z \mathcal{P}(z,\lambda_*)=0\}.
    \end{equation}
    Then 
    \begin{equation}
       \# \{k\in[0,1)^2:  \lambda_{{m}}(k)=\lambda_{*} \}\leq \# S_3 \leq \#S_2 \leq \#S_1.
    \end{equation}
\end{lemma}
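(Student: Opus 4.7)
The plan is to establish the chain of inequalities left to right. Each link is a set-theoretic inclusion (or injective map) between progressively larger ambient spaces, transported by the substitution $z_j = e^{2\pi i k_j}$ and then by multiplication by the nonvanishing monomial $z_1^{q_2} z_2^{q_1}$.

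For the leftmost inequality, I would invoke Theorem \ref{thmextrem} to conclude that every $k \in [0,1)^2$ with $\lambda_m(k) = \lambda_*$ lies in the locus where $P(k,\lambda_*) = 0$ and $\nabla_k P(k,\lambda_*) = 0$. I would then transport this condition to the $z$-variables using the bijection $\Phi : [0,1)^2 \to \T^2$ defined by $\Phi(k) = (e^{2\pi i k_1}, e^{2\pi i k_2})$. Because $P(k,\lambda) = \mathcal{P}(\Phi(k),\lambda)$, the chain rule gives
\[
\partial_{k_j} P(k,\lambda_*) \;=\; 2\pi i\, z_j\, \partial_{z_j} \mathcal{P}(z,\lambda_*)
\]
at $z = \Phi(k)$. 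Since $z_j \in \T$ is nonzero, the vanishing of $\nabla_k P$ is equivalent to the vanishing of $\nabla_z \mathcal{P}$ at $\Phi(k)$. Therefore $\Phi$ injects the level set into $S_3$, which is the first inequality.

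The middle inequality $\#S_3 \leq \#S_2$ is immediate from the inclusion $S_3 \subseteq S_2$, which holds because $\T^2 \subseteq (\C^{\star})^2$ and the two sets are cut out by the same equations.

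For the rightmost inequality, I would use the factorization $\mathcal{P}_1 = z_1^{q_2} z_2^{q_1}\, \mathcal{P}$, whose prefactor is nowhere zero on $(\C^{\star})^2$. By the Leibniz rule,
\[
\partial_{z_j} \mathcal{P}_1 \;=\; \bigl(\partial_{z_j}(z_1^{q_2} z_2^{q_1})\bigr)\, \mathcal{P} \;+\; z_1^{q_2} z_2^{q_1}\, \partial_{z_j} \mathcal{P}.
\]
At any point of $S_2$ the first term vanishes because $\mathcal{P}(z,\lambda_*) = 0$, so $\partial_{z_j} \mathcal{P}_1 = z_1^{q_2} z_2^{q_1}\, \partial_{z_j} \mathcal{P} = 0$, and trivially $\mathcal{P}_1(z,\lambda_*) = 0$ as well. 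Hence $S_2 \subseteq S_1$ as subsets of $\C^2$, which yields the last inequality. The argument is essentially bookkeeping; there is no genuine obstacle. The only care required is to apply the chain rule on $\T^2$ (where $z_j \neq 0$ so one can divide) and to notice that the Leibniz cross-term drops out on the zero set of $\mathcal{P}$. No multiplicity issues arise, since the claim concerns cardinalities of sets, for which plain inclusions suffice.
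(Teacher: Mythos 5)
Your proposal is correct and follows exactly the route the paper intends: the paper's own proof is the single line ``follows from Theorem \ref{thmextrem} immediately,'' and your argument simply supplies the omitted bookkeeping (the chain rule $\partial_{k_j}P = 2\pi i\,z_j\,\partial_{z_j}\mathcal{P}$ with $z_j\neq 0$ on $\T$, the trivial inclusion $\T^2\subseteq(\C^{\star})^2$, and the Leibniz cross-term vanishing on the zero set of $\mathcal{P}$). All three links check out; no gap.
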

\begin{proof}
The proof follows from Theorem \ref{thmextrem} immediately.
    
\end{proof}

\begin{definition}
    A polynomial $f$ is said to be square free if there does not exist an irreducible polynomial $g$ such that $g^2$ divides $f$. 
\end{definition}

From here on, we always assume that $q_1$ and $q_2$ are coprime.

\begin{theorem}~\cite[Theorem 2.3 and Remark 4]{liu1}
For any $\lambda\in\C$, the polynomial $\mathcal{P}_1(z,\lambda)$ (as a function of $z$) is square free. Moreover, if it factors, then no factors are univariate.
\end{theorem}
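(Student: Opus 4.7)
My plan is to read off the Newton polytope of $\mathcal{P}_1(z,\lambda)$ from Proposition~\ref{prop} and then show that its shape is too rigid to admit a repeated or univariate factor. By Proposition~\ref{prop}, the support of $\mathcal{P}(z,\lambda)$ as a Laurent polynomial in $z$ lies inside the rhombus $\{(c_1,c_2): |c_1q_1|+|c_2q_2|\le q_1q_2\}$, and its four extreme monomials $z_1^{\pm q_2}, z_2^{\pm q_1}$ carry coefficient $\pm 1$ independently of $\lambda$. Multiplying by $z_1^{q_2}z_2^{q_1}$ shifts these corners to the lattice points $(0,q_1), (q_2,0), (2q_2,q_1), (q_2,2q_1)$, which I take as the vertices of $R := \Newt(\mathcal{P}_1)$. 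The four edges of $R$ have direction vectors $\pm(q_2,q_1)$ and $\pm(q_2,-q_1)$; since $\gcd(q_1,q_2)=1$ these direction vectors are already primitive, so each edge of $R$ has primitive lattice length exactly one.

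The engine of the proof is the Newton-polytope identity $\Newt(fg) = \Newt(f) + \Newt(g)$, which holds here as an equality of sets (not merely a containment) because the coefficients of $\mathcal{P}_1$ at the four vertices of $R$ are $\pm 1$ and therefore cannot vanish under factorization. With this in hand, the no-univariate-factor claim is almost immediate: if $g \in \C[z_1]$ were a non-constant factor, then $\Newt(g)$ would be a positive-length segment on the $z_1$-axis, and the Minkowski sum $\Newt(f) + \Newt(g)$ would inevitably exhibit edges in the direction $(1,0)$. But $R$ has no horizontal edges (only the directions $\pm(q_2,\pm q_1)$ with $q_1 \ge 1$), the desired contradiction. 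Swapping the roles of $z_1$ and $z_2$ rules out univariate factors in $\C[z_2]$.

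For squarefreeness I would suppose $\mathcal{P}_1 = f\cdot g^2$ with $g$ non-constant and derive $R = \Newt(f) + 2\,\Newt(g)$. Every edge direction of $\Newt(g)$ must be one of the edge directions of $R$, since otherwise the Minkowski sum would force a new edge direction into $R$. An edge of $\Newt(g)$ in direction $\pm(q_2,\pm q_1)$ has length equal to some non-negative integer $k$ in primitive units (coprimality of $q_1,q_2$ makes each of these directions primitive), so the corresponding edge of $2\,\Newt(g)$ has length $2k$; since the matching edge of $R$ absorbs this and has length only one, we obtain $2k \le 1$ and hence $k = 0$. Therefore $\Newt(g)$ collapses to a single point, so $g$ is a monomial $c\,z_1^a z_2^b$. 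Non-constancy forces $z_1 \mid g$ or $z_2 \mid g$, whence $z_1 \mid \mathcal{P}_1$ or $z_2 \mid \mathcal{P}_1$, contradicting the presence of $z_2^{q_1}$ at vertex $(0,q_1)$ and $z_1^{q_2}$ at vertex $(q_2,0)$ with coefficient $\pm 1$.

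The step I expect to require the most care is the Minkowski-sum bookkeeping: the identity $\Newt(f) + \Newt(g) = R$ (as opposed to merely $\subseteq R$) depends on the non-vanishing of the four corner coefficients, which is precisely the content of Proposition~\ref{prop}(2), while the edge-length tally exploits $\gcd(q_1,q_2)=1$ to make every edge of $R$ primitive and therefore incompatible with a doubled summand. Once these two ingredients are set up, the remainder is an elementary polytope computation.
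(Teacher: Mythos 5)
Your argument is essentially sound, but note that the paper does not actually prove this statement: it is imported verbatim from \cite{liu1} (Theorem 2.3 and Remark 4 there), so what you have produced is an independent derivation from Proposition~\ref{prop} rather than a variant of an internal proof. Your route --- identifying $\Newt(\mathcal{P}_1(\cdot,\lambda))$ as the rhombus $R$ with vertices $(0,q_1)$, $(q_2,0)$, $(2q_2,q_1)$, $(q_2,2q_1)$, observing that coprimality of $q_1,q_2$ makes every edge of $R$ a single primitive lattice segment, and then using additivity of lattice edge lengths under Minkowski sum (for each outer normal $u$, the relation $1=\ell_u(\Newt f)+2\,\ell_u(\Newt g)$ forces $\ell_u(\Newt g)=0$, while a bivariate univariate factor would force a horizontal or vertical edge that $R$ does not have) --- is a clean polytope-combinatorial proof that meshes naturally with the BKK machinery the paper develops in Section~\ref{SEC:4}. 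Two points deserve attention. First, the identity $\Newt(fg)=\Newt(f)+\Newt(g)$ holds unconditionally for polynomials over a field (Ostrowski's lemma: the coefficient at a vertex of the Minkowski sum is a product of vertex coefficients of the factors); the nonvanishing of the four corner coefficients from Proposition~\ref{prop}(2) is needed only to guarantee that $\Newt(\mathcal{P}_1)$ is all of $R$ rather than a proper subpolytope, not to validate the product formula itself. Second, your univariate-factor step as written only treats $g\in\C[z_1]$ with at least two terms; a nonconstant monomial factor $c\,z_1^a$ has a single point for its Newton polytope and produces no horizontal edge, so that case must be excluded separately. You already do exactly this at the end of the squarefreeness argument (divisibility of $\mathcal{P}_1$ by $z_1$ or $z_2$ contradicts the presence of the corner monomials $z_2^{q_1}$ and $z_1^{q_2}$), so it suffices to say explicitly that the same observation disposes of monomial univariate factors. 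With these two sentences added, the proof is complete.
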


\begin{lemma}~\label{lem:2}
    Suppose $f$ has no univariate factors, then $f$ is square free if and only if $f$ and $\frac{\partial}{\partial z_1} f$ share no common factors.
\end{lemma}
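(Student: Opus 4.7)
The plan is to prove both directions by working with an irreducible common factor and using the product rule for $\partial/\partial z_1$. The hypothesis that $f$ has no univariate factors will be used exactly once, to rule out the degenerate case where an irreducible factor of $f$ happens to be annihilated by $\partial/\partial z_1$ (i.e., depends only on $z_2$).

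For the reverse implication (non–square free $\Rightarrow$ common factor), I would write $f = g^2 h$ for some irreducible $g$ and compute
\[
\frac{\partial f}{\partial z_1} = 2g\,\frac{\partial g}{\partial z_1}\,h + g^2\,\frac{\partial h}{\partial z_1} = g\left(2\frac{\partial g}{\partial z_1}h + g\,\frac{\partial h}{\partial z_1}\right),
\]
so $g$ divides both $f$ and $\partial f/\partial z_1$. Note that the hypothesis is not needed here.

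For the forward implication (square free $\Rightarrow$ no common factor), I would argue by contrapositive. Suppose $g$ is an irreducible polynomial dividing both $f$ and $\partial f/\partial z_1$, and write $f = g\,h$. From
\[
\frac{\partial f}{\partial z_1} = \frac{\partial g}{\partial z_1}\,h + g\,\frac{\partial h}{\partial z_1},
\]
the divisibility $g \mid \partial f/\partial z_1$ forces $g \mid \tfrac{\partial g}{\partial z_1}\, h$. Since $g$ is irreducible, either $g \mid h$ or $g \mid \partial g/\partial z_1$. In the first case, $g^2 \mid f$ and we are done. In the second case, comparing $z_1$-degrees, $\deg_{z_1}(\partial g/\partial z_1) < \deg_{z_1}(g)$ whenever $\partial g/\partial z_1 \neq 0$, so we must have $\partial g/\partial z_1 = 0$; then $g$ is a polynomial in $z_2$ alone, which makes $g$ a univariate factor of $f$, contradicting the hypothesis.

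The only real obstacle is the second alternative above, and it is dispatched precisely by the no-univariate-factors assumption. The rest is bookkeeping with the product rule and irreducibility. I would present the two implications as two short paragraphs in the final writeup and explicitly flag where the hypothesis is invoked, since it is the one substantive ingredient beyond unique factorization in $\mathbb{C}[z_1,z_2]$.
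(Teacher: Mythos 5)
Your proposal is correct and follows essentially the same route as the paper: the easy direction via $f=g^2h$ and the product rule, and the substantive direction by writing $f=gh$, extracting $g \mid (\partial g/\partial z_1)h$, and using irreducibility together with the no-univariate-factors hypothesis to rule out $g \mid \partial g/\partial z_1$. Your degree-comparison justification for that last step is in fact slightly more careful than the paper's, which simply asserts $\gcd(g,\partial g/\partial z_1)=1$.
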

\begin{proof} 
It is clear that $f$ and $\frac{\partial}{\partial z_1} f$ sharing no common factors implies that $f$ is square free.

We prove the other direction by contradiction. Suppose $f$ is square free and assume that there exists an irreducible polynomial $g$ that divides both $f$ and $\frac{\partial}{\partial z_1} f$. As $g$ divides $f$, there exists a polynomial $h$ such that $f = gh$. Notice that $\frac{\partial}{\partial z_1} f = (\frac{\partial}{\partial z_1} g)h + (\frac{\partial}{\partial z_1} h) g$. By our assumption on $f$ having no univariate factors, we must have that both $\frac{\partial}{\partial z_1} g$ and $\frac{\partial}{\partial z_1} h$ are nonzero. 

Clearly $g$ divides $(\frac{\partial}{\partial z_1} h) g$, and thus $g$ divides $\frac{\partial}{\partial z_1} f$ if and only if $g$ divides $(\frac{\partial}{\partial z_1} g)h$. As $g$ is irreducible, $\gcd(g, (\frac{\partial}{\partial z_1} g)) = 1$, and so $g$ divides $(\frac{\partial}{\partial z_1} g)h$ if and only $g$ divides $h$. As we assumed that $f$ and $\frac{\partial}{\partial z_1} f$ share a common factor, it must be the case that $g$ divides $h$, but this then contradicts the assumption that $f$ is square free.
\end{proof}
% Introduce previous theorem
\begin{theorem}\label{thmex}~\cite[Theorem 1.4]{liu1}
    Let $\lambda_{*}$ be an extremum of $\lambda_m(k)$, $k\in [0,1)^2,  m=1,2,\cdots,Q$. Then the level set
    \begin{equation*}\label{last18}
		\{k\in[0,1)^2:  \lambda_{{m}}(k)=\lambda_{*} \}
    \end{equation*}
    has cardinality at most $4(q_1+q_2)^2$. 
\end{theorem}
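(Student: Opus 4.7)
The plan is to combine the preceding lemma with B\'ezout's theorem applied to $\mathcal{P}_1(z,\lambda_*)$ and its partial derivative in $z_1$. By the preceding lemma, it suffices to bound $\#S_1$, so I would focus on the algebraic system $\mathcal{P}_1(z,\lambda_*) = 0$ and $\frac{\partial}{\partial z_1}\mathcal{P}_1(z,\lambda_*) = 0$, as $S_1$ is clearly contained in its zero set.

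First, I would invoke the theorem preceding Lemma~\ref{lem:2} (which uses the standing assumption that $q_1, q_2$ are coprime) to conclude that $\mathcal{P}_1(z,\lambda_*)$ is square free with no univariate factors. Lemma~\ref{lem:2} then gives that $\mathcal{P}_1(z,\lambda_*)$ and $\frac{\partial}{\partial z_1}\mathcal{P}_1(z,\lambda_*)$ share no common factor. This is the key algebraic input that legitimizes the application of B\'ezout: the two plane affine curves they define in $\C^2$ have no common irreducible component, so their intersection is bounded by the product of their total degrees.

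Next, I would carry out the degree bookkeeping. By Proposition~\ref{prop}(1), each monomial $z_1^{c_1}z_2^{c_2}$ appearing in $\mathcal{P}(z,\lambda_*)$ satisfies $|c_1 q_1| + |c_2 q_2| \leq q_1 q_2$, which forces $c_1 \in [-q_2,q_2]$ and $c_2 \in [-q_1,q_1]$. After multiplying by $z_1^{q_2}z_2^{q_1}$, the exponents of monomials in $\mathcal{P}_1(z,\lambda_*)$ lie in $[0,2q_2] \times [0,2q_1]$, giving total degree at most $2q_1 + 2q_2$. The same bound holds (trivially) for $\frac{\partial}{\partial z_1}\mathcal{P}_1(z,\lambda_*)$. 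B\'ezout's theorem then produces
\[
\#S_1 \leq (2q_1 + 2q_2)(2q_1 + 2q_2) = 4(q_1+q_2)^2,
\]
which combined with the preceding lemma gives the desired conclusion.

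The proof is essentially structural: every nontrivial ingredient is already in place. The only calculation is the exponent accounting that converts the weighted inequality $|c_1 q_1| + |c_2 q_2| \leq q_1 q_2$ into a total-degree bound for $\mathcal{P}_1$. The one potential pitfall — ensuring B\'ezout can be applied — is handled by Lemma~\ref{lem:2}, so there is no real obstacle. I note that no attempt is being made here to exploit the Newton-polygon structure implicit in Proposition~\ref{prop}(1); doing so (via BKK) is precisely what should later give the sharper $4q_1q_2$ bound advertised in the introduction.
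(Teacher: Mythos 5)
Your proof is correct and follows essentially the same route as the paper's appendix argument: reduce to $\#S_1$ via the counting lemma, use the square-free/no-univariate-factor theorem together with Lemma~\ref{lem:2} to rule out common factors, and apply B\'ezout to $\mathcal{P}_1(z,\lambda_*)$ and $\frac{\partial}{\partial z_1}\mathcal{P}_1(z,\lambda_*)$. The only difference is the degree bookkeeping: your box bound $[0,2q_2]\times[0,2q_1]$ gives total degree $2q_1+2q_2$ and hence $4(q_1+q_2)^2$, whereas the appendix exploits $|c_1|q_1+|c_2|q_2\le q_1q_2$ with $q_1\ge q_2$ to get total degree $2q_1+q_2$ and the sharper bound $(2q_1+q_2)(2q_1+q_2-1)$ noted in the remark.
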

\begin{remark}
Although $4(q_1+q_2)^2$ is obtained, by checking the details of the proof of \cite[Theorem 2.6]{liu1}, we could obtain the bound $(2q_1+q_2)(2q_1+q_2-1)$. This proof has been added in the appendix. Keep in mind that we are assuming that $q_1 \geq q_2$.
\end{remark}
%-------------------------------------------

\section{An Improved B\'{e}zout's Bound}~\label{SEC:3}
\begin{theorem}\label{thmex}
    Let $\lambda_{*}$ be an extremum  of   $\lambda_m(k)$, $k\in [0,1)^2,  m \in [1,Q]$. Then the level set
    \begin{equation*}\label{last18}
		\{k\in[0,1)^2:  \lambda_{{m}}(k)=\lambda_{*} \}
    \end{equation*}
    has cardinality at most $9q_1q_2-3$.
\end{theorem}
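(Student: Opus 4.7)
The plan is to reduce the problem to a standard B\'ezout count on a covering space of the torus. By the preceding lemma, $\#\{k\in[0,1)^2:\lambda_m(k)=\lambda_*\}\le \#S_2$. Since $\mathcal{P}_1=z_1^{q_2}z_2^{q_1}\mathcal{P}$, on $(\C^{\star})^2$ the equations $\mathcal{P}=0$ and $\mathcal{P}_1=0$ are equivalent, and on the common zero locus of $\mathcal{P}$ the equations $\partial_{z_1}\mathcal{P}=0$ and $\partial_{z_1}\mathcal{P}_1=0$ are also equivalent. Hence it suffices to bound $\#\{z\in(\C^{\star})^2:\mathcal{P}_1(z,\lambda_*)=\partial_{z_1}\mathcal{P}_1(z,\lambda_*)=0\}$. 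The cited theorem that $\mathcal{P}_1(z,\lambda)$ is square-free with no univariate factors, combined with Lemma~\ref{lem:2}, shows $\mathcal{P}_1(z,\lambda_*)$ and $\partial_{z_1}\mathcal{P}_1(z,\lambda_*)$ share no common factor in $\C[z_1,z_2]$.

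The ``algebraic change of variables'' I would use is the covering map $\pi:(\C^{\star})^2\to(\C^{\star})^2$ defined by $\pi(x,y)=(x^{q_1},y^{q_2})$, which is $q_1 q_2$-to-one. Define the pullback $\widetilde{\mathcal{P}}(x,y):=\mathcal{P}_1(x^{q_1},y^{q_2})$. A monomial $z_1^{a_1}z_2^{a_2}$ of $\mathcal{P}_1$ becomes $x^{q_1 a_1}y^{q_2 a_2}$, whose total $(x,y)$-degree is $q_1 a_1+q_2 a_2$. The Newton polygon of $\mathcal{P}_1$ is the rhombus with vertices $(0,q_1),(q_2,0),(2q_2,q_1),(q_2,2q_1)$, and the maximum of $q_1 a_1+q_2 a_2$ over this rhombus equals $3q_1 q_2$. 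Hence $\deg\widetilde{\mathcal{P}}=3q_1 q_2$ and $\deg\partial_x\widetilde{\mathcal{P}}\le 3q_1 q_2-1$.

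Next I would verify that $\widetilde{\mathcal{P}}$ and $\partial_x\widetilde{\mathcal{P}}$ are coprime in $\C[x,y]$, using the chain-rule identity $\partial_x\widetilde{\mathcal{P}}(x,y)=q_1 x^{q_1-1}(\partial_{z_1}\mathcal{P}_1)(x^{q_1},y^{q_2})$ together with the finiteness of the set $\{z\in\C^2:\mathcal{P}_1=\partial_{z_1}\mathcal{P}_1=0\}$ and the explicit evaluation $\widetilde{\mathcal{P}}(0,y)=c_{0,q_1}y^{q_1 q_2}\not\equiv 0$ (nonvanishing of $c_{0,q_1}$ follows from Proposition~\ref{prop}). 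Applying classical B\'ezout in $\mathbb{P}^2$ then gives $\#\{(x,y)\in\C^2:\widetilde{\mathcal{P}}=\partial_x\widetilde{\mathcal{P}}=0\}\le (3q_1 q_2)(3q_1 q_2-1)$; restricting to $(\C^{\star})^2$ and using that each common zero there of $(\mathcal{P}_1,\partial_{z_1}\mathcal{P}_1)$ pulls back to exactly $q_1 q_2$ zeros of $(\widetilde{\mathcal{P}},\partial_x\widetilde{\mathcal{P}})$, we obtain $\#S_2\le (3q_1 q_2)(3q_1 q_2-1)/(q_1 q_2)=9q_1 q_2-3$, as desired.

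The main obstacle I anticipate is the coprimality step: on the axis $\{x=0\}$ the derivative $\partial_x\widetilde{\mathcal{P}}$ vanishes identically whenever $q_1\ge 2$, so one must check that $\widetilde{\mathcal{P}}$ itself does not vanish identically there. The explicit form $\widetilde{\mathcal{P}}(0,y)=c_{0,q_1}y^{q_1 q_2}$ together with $c_{0,q_1}\neq 0$ rules this out; a symmetric argument handles $\{y=0\}$. With these boundary checks in place, the common zero set of $\widetilde{\mathcal{P}}$ and $\partial_x\widetilde{\mathcal{P}}$ in $\C^2$ is genuinely finite, validating the B\'ezout estimate above.
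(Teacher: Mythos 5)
Your proposal is correct and follows essentially the same route as the paper: the same substitution $z_j=x_j^{q_j}$, the same degree count $3q_1q_2$ for the pullback of $\mathcal{P}_1$, the same appeal to square-freeness plus Lemma~\ref{lem:2} for coprimality, classical B\'ezout, and division by the covering degree $q_1q_2$. Your version is in fact slightly more careful than the paper's on two points it glosses over --- checking that no common factor is picked up along the axes $\{x=0\}$, $\{y=0\}$, and restricting the preimage count to $(\C^{\star})^2$ where the fiber genuinely has $q_1q_2$ points --- but these are refinements of, not departures from, the published argument.
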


% Bezouts
\begin{theorem}[B\'{e}zout's Theorem] 
Let \( f(z_1, z_2) \) and \( g(z_1, z_2) \) be polynomials in two variables \( z_1 \) and \( z_2 \), with degrees \( d_1 \) and \( d_2 \), respectively. If \( f(z_1, z_2) \) and \( g(z_1, z_2) \) do not share a common factor, then the system \( f(z_1,z_2) = g(z_1,z_2) = 0 \) has at most \( d_1 d_2 \) solutions in $\C^2$, counting multiplicities.
\end{theorem}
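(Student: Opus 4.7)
My plan is to prove B\'{e}zout's theorem by first using resultants to get the set-theoretic count of common zeros, and then upgrading to the multiplicity statement via a projective Koszul-complex argument. As a preparatory step I would apply a generic linear change of coordinates $(z_1, z_2) \mapsto (z_1, z_2 + t z_1)$ for some $t \in \mathbb{C}$ ensuring that (a) the leading $z_2$-coefficients of $f$ and $g$ are nonzero constants in $z_1$, and (b) the projection to the $z_1$-axis is injective on the common zero set. Both hold for all but finitely many $t$, since by the no-common-factor hypothesis the common zero set is finite and the homogenized top-degree forms of $f$ and $g$ share only finitely many projective roots.

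Viewing $f, g$ as elements of $\mathbb{C}[z_1][z_2]$, I would then form the Sylvester resultant $R(z_1) = \operatorname{Res}_{z_2}(f, g)$, the determinant of a $(d_1 + d_2) \times (d_1 + d_2)$ matrix in the coefficients $a_i(z_1)$ and $b_j(z_1)$ of $f$ and $g$, with $\deg a_i \leq d_1 - i$ and $\deg b_j \leq d_2 - j$. A weighted bookkeeping on the Leibniz expansion gives $\deg R \leq d_1 d_2$, and Gauss's lemma, applied to $\gcd(f, g) = 1$ in $\mathbb{C}[z_1, z_2]$, gives $R \not\equiv 0$ in $\mathbb{C}[z_1]$. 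The specialization property of resultants, usable thanks to (a), then forces $R(\alpha) = 0$ whenever $f(\alpha, \cdot)$ and $g(\alpha, \cdot)$ share a root, so by (b) the number of common zeros of $f$ and $g$ is at most the number of distinct roots of $R$, hence at most $d_1 d_2$.

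To upgrade to the count with multiplicities, I would pass to the projective closure: homogenize $f, g$ to $F, G \in S := \mathbb{C}[z_0, z_1, z_2]$ of degrees $d_1, d_2$. Because $F$ and $G$ share no common factor, the ideal $(F, G)$ has codimension two in $S$, so $F, G$ form a regular sequence, using that $S$ is Cohen-Macaulay. The Koszul resolution
\begin{equation*}
0 \to S(-d_1 - d_2) \to S(-d_1) \oplus S(-d_2) \to S \to S/(F, G) \to 0
\end{equation*}
is therefore exact, and an Euler-characteristic computation on Hilbert polynomials yields the constant value $d_1 d_2$ in large degree. Since $\operatorname{Proj}(S/(F, G))$ is zero-dimensional, this constant equals the sum of local scheme-theoretic lengths at the projective common zeros, i.e.\ the sum of intersection multiplicities. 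The affine multiplicities are a subset of these, so their total is at most $d_1 d_2$.

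I expect the main obstacle to be the regular-sequence step underlying the Koszul resolution: one must translate the elementary ``no common factor'' hypothesis into the algebraic statement that $G$ is a nonzerodivisor modulo $F$, via the codimension-versus-length relationship in the Cohen-Macaulay ring $S$. This is really the conceptual heart of the theorem, and without it the resultant argument alone only produces the set-theoretic bound and gives no genuine control over local multiplicities at singular or tangential intersections.
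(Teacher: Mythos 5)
The paper does not prove this statement at all: B\'{e}zout's theorem is quoted as a classical black-box result and immediately applied, so there is no in-paper argument to compare yours against. On its own merits, your outline is the standard and essentially correct proof: resultants give the set-theoretic bound, and the projective Koszul/Hilbert-polynomial computation gives the count with multiplicities (in fact equality over $\mathbb{P}^2$, of which the affine bound is a corollary). Two small points. First, there is a mild circularity in your preparatory step: you choose $t$ using the finiteness of the common zero set, but finiteness is itself part of what the resultant argument establishes; the usual fix is to first observe that $\operatorname{Res}_{z_2}(f,g)$ and $\operatorname{Res}_{z_1}(f,g)$ are both not identically zero (Gauss's lemma needs no coordinate change for this), which already confines the common zeros to a finite grid, and only then pick the generic $t$. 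Second, the ``conceptual heart'' you flag --- that $G$ is a nonzerodivisor modulo $F$ --- does not really need the Cohen--Macaulay/unmixedness machinery: since $S=\mathbb{C}[z_0,z_1,z_2]$ is a UFD and $\gcd(F,G)=1$ (note that $z_0$ divides neither homogenization, so coprimality survives homogenizing), $F\mid GH$ forces $F\mid H$ directly, so $F,G$ is a regular sequence by elementary unique factorization. With those adjustments your argument is complete; it is simply far more than the paper requires, since the paper only invokes the theorem, in Section~\ref{SEC:3} and the appendix, as a known bound.
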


%Before using the theorem, we need to verify that there are no components in common between the two polynomials.

% Proof of 3.1
\begin{proof}[\bf Proof of Theorem \ref{thmex}]
    We would like to give an upper bound on $\#S_1$ similar to the way we did for Theorem 2.5. We start by supposing that $z = (x_1^{q_1}, x_2^{q_2})$. We want to find the number of $x \in \C^2$ such that the following two equations hold,
\begin{equation}
    \mathcal{P}_1((x_1^{q_1}, x_2^{q_2}), \lambda_*) = 0,
\end{equation}
and
\begin{equation}
    \frac{\partial}{\partial x_1}\mathcal{P}_1((x_1^{q_1}, x_2^{q_2}), \lambda_*) = 0.
\end{equation}
By Proposition 2.1,
we  have that the degree of $\mathcal{P}_1((x_1^{q_1}, x_2^{q_2}), \lambda_*)$ in terms of $x_1$ and $x_2$, is $3q_1q_2$,
%This is because by Proposition 2.1, $\mathcal{P}((x_1^{q_1}, x_2^{q_2}), \lambda_*)$ has highest degree terms (up to a change of sign) $x_1^{q_1q_2}$, $x_1^{-q_1q_2}$, $x_2^{q_1q_2}$, and  $x_2^{-q_1q_2}$ and we are multiplying by $z_1^{q_2}z_2^{q_1} = x_1^{q_1q_2}x_2^{q_1q_2}$ to get to $\mathcal{P}_1((x_1^{q_1}, x_2^{q_2}), \lambda_*)$. 
and the degree of $\frac{\partial}{\partial x_1}\mathcal{P}_1((x_1^{q_1}, x_2^{q_2}), \lambda_*)$ is $3q_1q_2 - 1$. We can also see from Theorem 2.4 and Lemma 2.5 that $\mathcal{P}_1((x_1^{q_1}, x_2^{q_2}), \lambda_*)$ does not share a common factor with $\frac{\partial}{\partial x_1}\mathcal{P}_1((x_1^{q_1}, x_2^{q_2}), \lambda_*)$. Using B\'{e}zout's Theorem, we get that the number of solutions is at most $9q_1^2q_2^2 - 3q_1q_2$. We also know that for each solution in terms of $z$, there exist $q_1q_2$ solutions in terms of $x$. Dividing by $q_1q_2$, we get that $\#S_1$ is at most $9q_1q_2 - 3$. Applying Lemma 2.3 , we conclude that $\#\{k\in[0,1)^2:  \lambda_{{m}}(k)=\lambda_{*} \} \leq  9q_1q_2 - 3$.
\end{proof} 
%-------------------------------------------

\section{Bernstein-Khovanskii–Kushnirenko Bound}~\label{SEC:4}

\begin{theorem}\label{thmex}
    Let $\lambda_{*}$ be an extremum  of   $\lambda_m(k)$, $k\in [0,1)^2,  m=1,2,\cdots,Q$. Then the level set
    \begin{equation*}\label{last18}
		\{k\in[0,1)^2:  \lambda_{{m}}(k)=\lambda_{*} \}
    \end{equation*}
    has cardinality at most $4q_1q_2$.
\end{theorem}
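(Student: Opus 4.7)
The plan is to apply the Bernstein-Khovanskii-Kushnirenko (BKK) theorem directly to the Laurent polynomial system
\[
\mathcal{P}(z,\lambda_*) = 0, \qquad \frac{\partial \mathcal{P}}{\partial z_1}(z,\lambda_*) = 0
\]
on $(\C^{\star})^2$, and then to invoke Lemma 2.3. Since every element of $S_2$ is a common zero of this pair in $(\C^{\star})^2$, a BKK-style bound on the common zero count will control $\#S_2$ and therefore the cardinality of the level set.

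First I would confirm that $\mathcal{P}$ and $\partial_{z_1}\mathcal{P}$ share no common factor, which is the hypothesis required for BKK to yield a finite count. Theorem 2.4 says $\mathcal{P}_1(z,\lambda_*) = z_1^{q_2}z_2^{q_1}\mathcal{P}(z,\lambda_*)$ is square-free with no univariate factors. Because the monomial $z_1^{q_2}z_2^{q_1}$ is a unit in the Laurent polynomial ring, these properties pass to $\mathcal{P}(z,\lambda_*)$, and Lemma 2.5 (applied to either $\mathcal{P}$ directly or to $\mathcal{P}_1$ after noting that $\partial_{z_1}\mathcal{P}_1$ and $\partial_{z_1}\mathcal{P}$ have the same zero locus on $\{\mathcal{P}=0\}\cap(\C^{\star})^2$) then forces coprimality of $\mathcal{P}$ and $\partial_{z_1}\mathcal{P}$.

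Next I would identify the Newton polytopes. By Proposition 2.1(1), every monomial $z_1^{c_1}z_2^{c_2}$ appearing in $\mathcal{P}(z,\lambda_*)$ satisfies $|c_1|q_1+|c_2|q_2\le q_1q_2$, so the Newton polytope of $\mathcal{P}(z,\lambda_*)$ is contained in the rhombus $R$ with vertices $(\pm q_2,0)$ and $(0,\pm q_1)$. This rhombus has diagonals of length $2q_2$ and $2q_1$, hence area $\tfrac{1}{2}(2q_1)(2q_2) = 2q_1q_2$. The Newton polytope of $\partial_{z_1}\mathcal{P}(z,\lambda_*)$ is contained in the translate $R-(1,0)$. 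BKK then bounds the number of isolated common zeros in $(\C^{\star})^2$ by the mixed volume $MV(R,R-(1,0))$, which by translation-invariance of mixed volume equals $MV(R,R) = 2\cdot\text{Area}(R) = 4q_1q_2$. Combined with Lemma 2.3, this yields the claimed $4q_1q_2$ bound.

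The main obstacle I anticipate is entirely bookkeeping: ensuring that the square-freeness and no-univariate-factor conclusions of Theorem 2.4 transfer cleanly from the honest polynomial $\mathcal{P}_1$ to the Laurent polynomial $\mathcal{P}$, and being careful about the normalization in BKK so that the rhombus $R$ produces exactly $4q_1q_2$ (the factor-of-two identity $MV(R,R)=2\,\text{Area}(R)$ in dimension two) rather than $2q_1q_2$ or $8q_1q_2$. The conceptual gain over the $9q_1q_2-3$ bound from Section 3 is that BKK is sensitive to the diamond shape of the support, while the Bezout approach after the substitution $z_j = x_j^{q_j}$ can only see the total degree of the resulting dense polynomial.
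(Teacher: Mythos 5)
Your proposal is correct and follows essentially the same route as the paper: bound the level set by $\#S_2$ via Lemma 2.3, note that the Newton polytope of $\mathcal{P}(z,\lambda_*)$ lies in the rhombus with vertices $(\pm q_2,0)$ and $(0,\pm q_1)$ of area $2q_1q_2$, and apply the BKK theorem to get the mixed-volume bound $4q_1q_2$. The only cosmetic difference is that you handle $\partial_{z_1}\mathcal{P}$ via translation-invariance of the mixed volume, whereas the paper multiplies by $z_1$ to place its Newton polytope back inside $N$ and then uses monotonicity --- the same maneuver in different clothing --- and your explicit verification of the no-common-factor hypothesis via Theorem 2.4 and Lemma 2.5 spells out a detail the paper's Section 4 proof leaves implicit.
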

The following definitions are standard in discrete geometry, see~\cite{Ewald}.
\begin{definition}
    For a Laurent polynomial $f(z)$, where $z$ is an $n$-dimensional variable, the support of $f(z)$ is the set of $s$ exponents $\mathcal{A}(f) = \{e_1,\dots, e_s\}$, where for every $1 \leq i \leq s$ $e_i \in \Z^n$, and there exists $c_i \neq 0$, such that $f(z) = \sum_{i = 1}^{s} c_i z^{e_i}$.
\end{definition}

For $n = 2$, $\mathcal{A}(f)$ represents the set of all pairs of exponents of $f$, which have non-zero coefficients. %This is the case that we will be using in the Theorem.

\begin{definition}
    The Newton polytope of $f(z)$, denoted by $N(f)$ is the convex hull of $\mathcal{A}(f)$ in $\R^n$.
\end{definition}

In particular, the Newton polytope of $f(z)$ represents the smallest convex set containing $\mathcal{A}(f)$. Given a polytope $P \subset \R^n$, we define $V(P)$ to denote the $n$-dimensional Euclidean volume. For $n = 2$, we can just take the area of the polygon formed in the plane.

\begin{definition}
    Given two polytopes $P_1$ and $P_2$ in $\R^2$ such that neither is contained in a single line in $\R^2$, their mixed volume\footnote{Note that mixed volume may also be defined as $2MV(P_1, P_2) = V(P_1 + P_2) - V(P_1) - V(P_2)$. We follow the convention of~\cite{Bernstein}.} is defined as \[MV(P_1, P_2) = V(P_1 + P_2) - V(P_1) - V(P_2). \]
\end{definition}
Here we take the addition of two polytopes to be their Minkowski sum, which is defined to be $P_1+P_2=\{p_1 + p_2 \mid p_1 \in P_1, p_2\in P_2\}$. We note that mixed volume is monotonic; that is, if $P_3$ is a polytope such that $P_2 \subseteq P_3$, then $MV(P_1,P_2) \leq MV(P_1,P_3)$.  

\begin{lemma}[see  ~\cite{Bernstein}]
    If $P$ is a $2$-dimensional polytope, then $MV(P,P) = 2V(P)$.
\end{lemma}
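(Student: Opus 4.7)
The plan is to unpack the definition of mixed volume and reduce the identity to two standard facts about convex polytopes: that the Minkowski self-sum $P+P$ coincides with the dilate $2P$, and that volume scales by the factor $2^n$ under dilation by $2$ in $\R^n$.

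First I would write
\[
MV(P,P) \;=\; V(P + P) - V(P) - V(P) \;=\; V(P+P) - 2V(P),
\]
so that proving $MV(P,P) = 2V(P)$ is equivalent to showing $V(P+P) = 4V(P)$. The key intermediate claim is that, for any convex set $P \subset \R^2$,
\[
P + P \;=\; 2P \;:=\; \{2p : p \in P\}.
\]
The inclusion $2P \subseteq P+P$ is immediate from $2p = p + p$. For the reverse inclusion, given $p_1, p_2 \in P$, convexity of $P$ gives $\tfrac{1}{2}(p_1 + p_2) \in P$, hence $p_1 + p_2 = 2 \cdot \tfrac{1}{2}(p_1+p_2) \in 2P$. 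Since any polytope is convex, this applies to $P$.

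Next I would invoke the scaling behavior of two-dimensional Euclidean volume: the linear map $x \mapsto 2x$ on $\R^2$ has Jacobian determinant $4$, so $V(2P) = 4\,V(P)$. Combining with the previous step yields $V(P+P) = V(2P) = 4\,V(P)$, and substituting into the expression for $MV(P,P)$ gives
\[
MV(P,P) \;=\; 4\,V(P) - 2\,V(P) \;=\; 2\,V(P),
\]
as required.

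There is no serious obstacle here; the only point that requires a moment's care is the equality $P+P = 2P$, which genuinely uses the convexity of $P$ (it fails badly for non-convex sets). The hypothesis that $P$ is two-dimensional is needed only to ensure that the mixed volume is defined via the formula given in the preceding definition (which excludes degenerate polytopes contained in a line) and that $V$ is the honest two-dimensional Lebesgue measure, so that the scaling factor is $2^2 = 4$.
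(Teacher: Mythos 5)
Your proof is correct. The paper itself gives no argument for this lemma---it is stated with only a citation to Bernstein---so there is nothing to compare against except the definition, and your verification is exactly the standard one: from the paper's convention $MV(P_1,P_2)=V(P_1+P_2)-V(P_1)-V(P_2)$, the identity reduces to $V(P+P)=4V(P)$, which follows from $P+P=2P$ (valid precisely because a polytope is convex) together with the scaling $V(2P)=2^2V(P)$ in $\R^2$. You are also right to flag where the two hypotheses enter: convexity for $P+P=2P$, and two-dimensionality both for the nondegeneracy required by the paper's definition of mixed volume and for the scaling factor being $4$ rather than $2^n$ for some other $n$. No gaps.
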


\begin{theorem}[Bernstein–Khovanskii-Kushnirenko theorem ~\cite{Bernstein}]~\label{thm:Bern}
Given a system of Laurent polynomial equations $f_1(z) = f_2(z) = 0$, where $z$ is a $2$-dimensional variable and $f_1(z)$ and $f_2(z)$ share no common factors, the number of solutions in $(\C^{\star})^2$, counted with multiplicity, is at most the mixed volume of the Newton polytopes of $f_1(z)$ and $f_2(z)$. 
\end{theorem}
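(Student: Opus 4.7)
The plan is to prove the BKK theorem in the two-dimensional case via toric geometry, realizing the mixed volume as an intersection number on a suitable toric surface. Since this is a classical result, the proposal is really a roadmap through the standard argument specialized to $n=2$.

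First, I would set up the toric ambient space. Let $P_1 = N(f_1)$ and $P_2 = N(f_2)$ and form the Minkowski sum $P = P_1 + P_2$. Take $\Sigma$ to be the inner normal fan of $P$; this is a complete fan in $\R^2$ that simultaneously refines the normal fans of $P_1$ and $P_2$. Pass to a smooth refinement $\Sigma'$ of $\Sigma$ (this is easy in dimension two: just subdivide any singular cones by adding rays through lattice points). Let $X = X_{\Sigma'}$ be the associated smooth complete toric surface. The algebraic torus $(\C^\star)^2$ sits as a dense open subset of $X$, and the complement is a union of torus-invariant prime divisors $D_\rho$, one for each ray $\rho$ of $\Sigma'$.

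Second, I would convert the Laurent polynomials $f_1,f_2$ into sections of line bundles on $X$. Each polytope $P_i$ determines a torus-invariant Cartier divisor $D_{P_i}$ on $X$; concretely, $D_{P_i} = \sum_\rho a_i(\rho)\, D_\rho$ where $a_i(\rho) = -\min_{m\in P_i}\langle m,u_\rho\rangle$ and $u_\rho$ is the primitive generator of $\rho$. Multiplying $f_i$ by an appropriate monomial clears the negative exponents, so after this adjustment $f_i$ becomes a global section of $\mathcal{O}_X(D_{P_i})$, and its zero locus $Z_i \subset X$ is a curve in the linear equivalence class of $D_{P_i}$. Solutions to $f_1 = f_2 = 0$ in $(\C^\star)^2$ are exactly the points of $Z_1 \cap Z_2$ lying in the open torus.

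Third, I would identify the intersection number with the mixed volume. The hypothesis that $f_1,f_2$ share no common factor implies that $Z_1$ and $Z_2$ meet in finitely many points, and the total intersection (with multiplicity) equals $D_{P_1}\cdot D_{P_2}$ in the Chow ring of $X$. The key combinatorial input, available in any standard reference on toric varieties, is the identity
\begin{equation*}
    D_{P_1}\cdot D_{P_2} \;=\; \tfrac{1}{2}\bigl(V(P_1+P_2) - V(P_1) - V(P_2)\bigr) \cdot 2 \;=\; MV(P_1,P_2)
\end{equation*}
under the normalization of mixed volume stated in the excerpt (where $MV(P,P) = 2V(P)$ for $n=2$). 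The number of solutions in $(\C^\star)^2$, counted with multiplicity, is at most the total intersection count, since some of the intersection points of $Z_1$ and $Z_2$ may lie on the boundary divisors $D_\rho$ and are thus excluded. This yields the bound $\#\{z\in(\C^\star)^2 : f_1(z)=f_2(z)=0\} \leq MV(P_1,P_2)$.

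The main obstacle is the third step: verifying that the toric intersection product really computes the mixed volume with the stated normalization. This identification is ultimately a convex-geometric statement about self-intersection numbers of nef torus-invariant divisors and their relationship to lattice point counts (via Ehrhart-type polynomials), and while it is well-known, carefully setting up the signs and factors of $2$ to match the convention used in this paper is the delicate part. A secondary subtlety is handling the case where $P_1$ or $P_2$ is a line segment, where $D_{P_i}$ fails to be ample and one must be slightly careful with the intersection-theoretic argument — however, the bound still holds, since a small generic perturbation of $f_i$ only increases the number of torus solutions.
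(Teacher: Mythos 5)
The paper offers no proof of this statement: it is imported as a black box with a citation to Bernstein's article, and the only features actually used downstream are the inequality itself, the normalization $MV(P,P)=2V(P)$, and monotonicity of mixed volume. So there is no in-paper argument to compare against, and the question is only whether your roadmap is sound. It is, and it is the standard toric-geometry proof in dimension two: compactify $(\C^{\star})^2$ to a smooth complete toric surface $X$ obtained from a smooth refinement of the normal fan of $P_1+P_2$; since $P_i$ is the \emph{exact} Newton polytope of $f_i$, the divisor of the associated section of $\mathcal{O}_X(D_{P_i})$ contains no boundary component, so the closure $\overline{Z_i}$ of the zero locus is linearly equivalent to the nef divisor $D_{P_i}$; the no-common-factor hypothesis forces $\overline{Z_1}$ and $\overline{Z_2}$ to share no component, so the torus solutions (with their local multiplicities) are a sub-sum of the global intersection number $D_{P_1}\cdot D_{P_2}=MV(P_1,P_2)$, all of whose local contributions are positive. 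This route is genuinely different from the cited source: Bernstein's original argument counts solutions by a deformation/Puiseux-series analysis of how roots escape to the boundary of the torus as coefficients degenerate, whereas yours trades that analysis for the toric dictionary and gets the inequality essentially for free once the intersection-theoretic identity is in hand.

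Two honest caveats, both of which you already flag. First, the identity $D_{P_1}\cdot D_{P_2}=MV(P_1,P_2)$ is where all the real work lives, and your proposal quotes it rather than proves it; as written you have reduced BKK to another standard fact, which is acceptable for a tool-theorem but is not a self-contained proof. Second, the degenerate case where some $P_i$ is a segment is outside the paper's own definition of mixed volume (which requires that neither polytope lie in a line), so your perturbation remark, while reasonable, is not needed to match the statement as given.
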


\begin{lemma}~\label{lem:newpoly}
    The Newton polytope, $N$, of $\mathcal{P}(z,\lambda_*)$ is that of Figure~\ref{polytope}.
    \begin{figure}[h!]
    \centering
    \begin{tikzpicture}
        \draw [thin, gray, ->] (0,-2.5) -- (0,2.5)
            node [above, black] {$z_2$};
        
        \draw [thin, gray, ->] (-2.5,0) -- (2.5,0)
            node [right, black] {$z_1$}; 

        \filldraw[thick,fill=gray,fill opacity=0.4] (0,2) -- (1,0) -- (0,-2) -- (-1,0) -- cycle; 
        \node [left] at (0,2) {$q_1$};
        \node [below] at (1.12,0) {$q_2$};
        \node [left] at (0,-2) {$-q_1$};
        \node [below] at (-1.3,0) {$-q_2$};
    \end{tikzpicture}
    \caption{$N$, a convex area containing all of the exponents of $\mathcal{P}(z,\lambda_{*})$.}
      \label{polytope}
\end{figure}
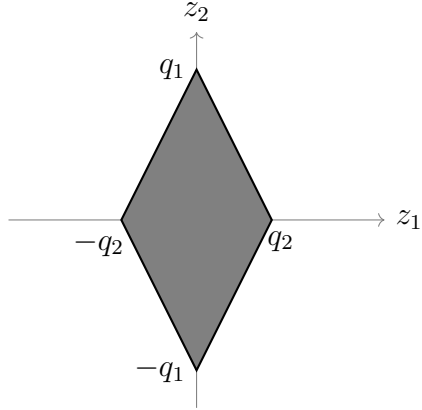
\end{lemma}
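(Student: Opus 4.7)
The plan is to read off both containments directly from Proposition \ref{prop}, so essentially all the hard work is already done and this lemma is a bookkeeping step that translates the algebraic statements about $\mathcal{P}(z,\lambda_*)$ into the geometric picture of Figure \ref{polytope}.

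First I would let $D$ denote the closed diamond in $\R^2$ with vertices $(q_2,0)$, $(0,q_1)$, $(-q_2,0)$, $(0,-q_1)$, i.e.\ the polytope drawn in Figure \ref{polytope}. A convenient description is
\begin{equation*}
D = \bigl\{(c_1,c_2)\in\R^2 : |c_1|q_1 + |c_2|q_2 \leq q_1q_2\bigr\},
\end{equation*}
which one sees by checking that the four vertices lie on the boundary $|c_1|q_1+|c_2|q_2=q_1q_2$ and that $D$ is convex.

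Next I would establish $\mathcal{A}(\mathcal{P}(z,\lambda_*)) \subseteq D$. This is exactly part (1) of Proposition \ref{prop}: for any exponent $(c_1,c_2)$ appearing with nonzero coefficient, $|c_1|q_1 + |c_2|q_2 \leq q_1q_2$, so $(c_1,c_2)\in D$. Taking convex hulls gives $N(\mathcal{P}(z,\lambda_*)) \subseteq D$.

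For the reverse containment, I would use part (2) of Proposition \ref{prop}, which guarantees that the monomials $z_1^{q_2}$, $z_1^{-q_2}$, $z_2^{q_1}$, and $z_2^{-q_1}$ all appear with nonzero coefficient in $\mathcal{P}(z,\lambda_*)$. Thus the four points $(\pm q_2,0)$ and $(0,\pm q_1)$ all lie in $\mathcal{A}(\mathcal{P}(z,\lambda_*))$, and hence in $N(\mathcal{P}(z,\lambda_*))$. Since $D$ is the convex hull of these four vertices, $D \subseteq N(\mathcal{P}(z,\lambda_*))$. Combining the two containments gives $N(\mathcal{P}(z,\lambda_*)) = D$, which is the polytope of Figure \ref{polytope}. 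The only conceivable obstacle is whether Proposition \ref{prop}(2) really supplies all four extremal monomials (as opposed to just some subset forced by the sign convention ``up to a change of sign''), but the statement as quoted explicitly lists all four, so no additional argument is needed.
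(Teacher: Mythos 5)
Your proof is correct and follows exactly the paper's (one-line) argument: Proposition~\ref{prop}(1) gives the containment of the support in the diamond $\{|c_1|q_1+|c_2|q_2\le q_1q_2\}$, and Proposition~\ref{prop}(2) supplies the four vertices, forcing equality of convex hulls. You have merely written out the bookkeeping the paper leaves implicit, including correctly resolving the ``up to a change of sign'' caveat.
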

\begin{proof}
This follows immediately from Proposition 2.1 and the existence of $(q_2, 0)$, $(0, q_1)$, $(-q_2, 0)$, and $(0, -q_1)$ as exponents of $\mathcal{P}(z,\lambda_{*})$.
\end{proof}

\begin{corollary}~\label{cor:newpoly}
$N$ contains all the exponent vectors of $z_1(\frac{\partial}{\partial z_1}\mathcal{P}(z,\lambda_{*}))$.
\end{corollary}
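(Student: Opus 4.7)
The plan is to exploit the elementary observation that the operator $z_1\,\partial/\partial z_1$ is monomial-preserving in the strongest sense: applied to any Laurent monomial $z_1^{c_1}z_2^{c_2}$, it returns $c_1 z_1^{c_1}z_2^{c_2}$, a scalar multiple of the same monomial. Hence this operation can only rescale the coefficients of the terms already appearing in $\mathcal{P}(z,\lambda_*)$ (and annihilate those with $c_1=0$), but it never produces a monomial with a new exponent vector.

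Concretely, I would begin by writing
\[
\mathcal{P}(z,\lambda_*) \;=\; \sum_{(c_1,c_2)\in \mathcal{A}(\mathcal{P})} a_{c_1,c_2}\, z_1^{c_1} z_2^{c_2},
\]
and then compute term-by-term
\[
z_1\frac{\partial}{\partial z_1}\mathcal{P}(z,\lambda_*) \;=\; \sum_{(c_1,c_2)\in \mathcal{A}(\mathcal{P})} c_1\,a_{c_1,c_2}\, z_1^{c_1} z_2^{c_2}.
\]
From this identity it is immediate that
\[
\mathcal{A}\!\left(z_1\tfrac{\partial}{\partial z_1}\mathcal{P}(z,\lambda_*)\right) \;\subseteq\; \mathcal{A}(\mathcal{P}),
\]
with equality failing only by the possible dropping of monomials that were constant in $z_1$. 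Since $N$ was defined in Lemma~\ref{lem:newpoly} as the convex hull of $\mathcal{A}(\mathcal{P})$, in particular it contains $\mathcal{A}(\mathcal{P})$ itself, and hence it contains every exponent vector of $z_1\,\partial_{z_1}\mathcal{P}(z,\lambda_*)$, which is exactly the statement of the corollary.

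There is no real obstacle in this argument; the content of the corollary is merely that the $z_1$ prefactor cancels the downward shift in the $z_1$-exponent induced by $\partial/\partial z_1$, so no work is needed beyond the bookkeeping above. The reason this small statement is worth recording is that it lets us apply the Bernstein--Khovanskii--Kushnirenko theorem (Theorem~\ref{thm:Bern}) with the \emph{same} Newton polytope $N$ for both $\mathcal{P}(z,\lambda_*)$ and (a monomial rescaling of) $\partial_{z_1}\mathcal{P}(z,\lambda_*)$, after which monotonicity of mixed volume together with $MV(N,N)=2V(N)$ yields the target bound $4q_1q_2$.
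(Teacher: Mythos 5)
Your proof is correct and follows essentially the same term-by-term argument as the paper: differentiating in $z_1$ and multiplying back by $z_1$ sends each monomial $z_1^{c_1}z_2^{c_2}$ to $c_1 z_1^{c_1}z_2^{c_2}$, so the support can only shrink and remains inside $N$. No issues.
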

\begin{proof}
    Taking the partial derivative of $\mathcal{P}(z,\lambda_{*})$ with respect to $z_1$ is the same as taking the sum of the partial derivatives of each of its terms with respect to $z_1$. These terms either vanish individually or have their corresponding powers of $z_1$ decreased by $1$.  As we multiply this result by $z_1$, we see that $N$ is a convex area that contains all the exponent vectors of $z_1(\frac{\partial}{\partial z_1}\mathcal{P}(z,\lambda_{*}))$.
\end{proof}

\begin{proof}[\bf Proof of Theorem 4.1]
    By Lemma 2.3, we know that $\#\{k \in [0,1)^2 : \lambda_m(k) = \lambda_*\} \leq \#S_2$. Any solution that satisfies all three of the equations in $S_2$ must also satisfy any pair of them. Thus, we can bound our answer by the number of solutions to the pair of equations,

\begin{equation}~\label{eq:1}
    \mathcal{P}(z,\lambda_*) = 0,
\end{equation}
and
\begin{equation}~\label{eq:2}
    \frac{\partial}{\partial z_1}\mathcal{P}(z,\lambda_*) = 0.
\end{equation}

Define $S_4$ to be the set of solutions $z \in (\C^{\star})^2$ that satisfy both \eqref{eq:1} and \eqref{eq:2}. As $z \in (\C^{\star})^2$, \eqref{eq:2} is equivalent to the equation \begin{equation}~\label{eq:3}
    z_1(\frac{\partial}{\partial z_1}\mathcal{P}(z,\lambda_*)) = 0.
\end{equation}

Recall from Lemma~\ref{lem:newpoly} that $N$ is the Newton polytope of $\mathcal{P}(z,\lambda_*)$, and recall from Corollary~\ref{cor:newpoly} that the Newton polytope $N'$ of $z_1(\frac{\partial}{\partial z_1}\mathcal{P}(z,\lambda_*))$ is contained in $N$.

 By Lemma 4.2 and by the monotonicity property of mixed volume, we have that $MV(N,N') \leq MV(N,N) = 2V(N)$. Calculating the area of $N$, we see that $MV(N,N) = 4q_1q_2$. By Theorem~\ref{thm:Bern}, we conclude that $\#\{k \in [0,1)^2 : \lambda_m(k) = \lambda_*\} \leq \#S_2 \leq \#S_4 \leq MV(N,N) = 4q_1q_2$.
\end{proof}

\section*{Appendix}

\begin{proof}[\bf Proof of Remark 1]
We can first bound the number of solutions in terms of $S_1$. Any solution that satisfies all three of the equations in $S_1$ must also satisfy at least two of them. It follows that we can further bound the cardinality of the level set from above by the number of $z \in \C^2$ such that the following two equations hold,
\begin{equation}
    \mathcal{P}_1(z,\lambda_*) = 0,
\end{equation}
and
\begin{equation}
    \frac{\partial}{\partial z_1}\mathcal{P}_1(z,\lambda_*) = 0.
\end{equation}

Now let's examine $\mathcal{P}(z,\lambda_*)$ as a function in terms of $z_1$, $z_1^{-1}$, $z_2$, and $z_2^{-1}$. By Proposition 2.1, we know that the degree of $\mathcal{P}(z,\lambda_*)$ is at most $q_1$ and the degree of the polynomial $\mathcal{P}_1(z,\lambda_*) = z_1^{q_2}z_2^{q_1}\mathcal{P}(z,\lambda_*)$ is at most $2q_1 + q_2$. Now Theorem 2.4 tells us that $\mathcal P_1(z,\lambda_*)$ is a square-free polynomial with no univariate factors. This means that $\mathcal{P}_1(z,\lambda_*)$ and $\frac{\partial}{\partial z_1}\mathcal{P}_1(z,\lambda_*)$ are polynomials of degree at most $2q_1 + q_2$ and $2q_1 + q_2 - 1$ that share no common factors. Thus, by B\'{e}zout's Theorem, we can see that 

\[\#S_1 \leq (2q_1 + q_2)(2q_1 + q_2 - 1).\]

Finally, by Lemma 2.3, we know that $\#\{k \in [0,1)^2 : \lambda_m(k) = \lambda_*\} \leq \#S_1$, and so we see that

\[\#\{k \in [0,1)^2 : \lambda_m(k) = \lambda_*\} \leq  (2q_1 + q_2)(2q_1 + q_2 - 1).\]
\end{proof}

  \section*{Acknowledgments}
%The authors are very grateful to the anonymous referees for their knowledgeable reports, which helped us to improve our manuscript.
This research was conducted as part of the ongoing Undergraduate Research Program, ``STODO" (Spectral Theory Of Differential Operators), at Texas A\&M University. We are grateful for the support provided by the College of Arts and Sciences Undergraduate Research Program at Texas A\&M University.
 W. Liu was a 2024-2025 Simons fellow.  This work was also partially supported by NSF DMS-2246031, DMS-2052572, DMS-2201005, and  DMS-2052519.

 \section*{Statements and Declarations}
		{\bf Conflict of Interest} 
	The authors  declare no conflicts of interest.
	
	\vspace{0.2in}
	{\bf Data Availability}
	Data sharing is not applicable to this article as no new data were created or analyzed in this study.
\bibliographystyle{abbrv} % abbrv
	\bibliography{main}
	
\end{document}